\documentclass[a4paper]{amsart}

\usepackage{amsmath,amsthm,amssymb}
\usepackage{hyperref}
\usepackage{todonotes}
\usepackage[all]{xy}

\newtheorem{theorem}{Theorem}[section]
\newtheorem{proposition}[theorem]{Proposition}
\newtheorem{lemma}[theorem]{Lemma}
\newtheorem{corollary}[theorem]{Corollary}

\theoremstyle{definition}

\newtheorem{definition}[theorem]{Definition}

\newcommand{\nat}{\mathbb{N}}
\newcommand{\ZZ}{\mathbb{Z}}
\newcommand{\gsets}[1]{#1\mathbf{-Set}}
\newcommand{\set}{\mathbf{Set}}
\newcommand{\baire}{\nat^\nat}
\newcommand{\tope}{\mathcal{E}}
\newcommand{\topf}{\mathcal{F}}
\newcommand{\iso}{\operatorname{Iso}}
\newcommand{\dihedral}{D_\infty}

\begin{document}
\title{On Dividing by Two in Constructive Mathematics}

\author{Andrew W Swan}
\address{Andrew Swan\\
Institute for Logic, Language and Computation\\
University of Amsterdam\\
Science Park 107\\
1098 XG Amsterdam\\
Netherlands}
\email{wakelin.swan@gmail.com}

\begin{abstract}
  A classic result due to Bernstein states that in set theory with
  classical logic, but without the axiom of choice, for all sets $X$
  and $Y$, if $X \times 2 \cong Y \times 2$ then also $X \cong Y$. We
  show that this cannot be done in constructive mathematics by giving
  some examples of toposes where it fails.
\end{abstract}

\maketitle

\section{Introduction}

In classical set theory with the axiom of choice it is trivial to show
that given two sets $X$ and $Y$, if there exists a bijection
$X \times 2 \cong Y \times 2$, then there exists also a bijection $X
\cong Y$. We refer to this statement as ``dividing by two.''

Surprisingly, it is possible to divide by two in even in absence of
the axiom of choice, by explicitly defining a bijection between $X$
and $Y$, given a bijection between $X \times 2$ and $Y \times 2$. This
was first proved by Bernstein in his thesis in 1905
\cite{Bernstein1905}. Later, in 1922 in \cite{Sierpinski1922},
Sierpi\'nski gave a simplified proof of the same result. For a more
recent exposition see the work of Conway and Doyle \cite{conwaydoyle},
who gave a proof of both this result and the more difficult problem of
dividing by three.

More recently still, this construction has seen some attention on
various forums online. In particular, the Mathematics Stack Exchange
user, Hanno, raised the question that we will answer in this paper:
whether division by two can be carried out in constructive
mathematics \cite{msequestion}.

Here, by constructive mathematics, we mean mathematics carried out
without the use of excluded middle, in the style of, for instance,
Bishop and Bridges in \cite{bishopbridges}. Toposes are
categories analogous to the category of sets, wherein mathematical
statements and constructive proofs can be interpreted. See
e.g. \cite{moerdijkmaclane} for an introduction to topos theory.

The main ideas of the proof
(including sections \ref{sec:basic}, \ref{sec:equivariance},
\ref{sec:continuity}, \ref{sec:nonexistence} and most of section
\ref{sec:maketopos}) can be understood with little to no prior
knowledge of constructive mathematics and topos theory. Some basic
knowledge of group theory will be useful, however.

\section{Outline of the Proof}
\label{sec:basic}

The key part of our proof that dividing by two is impossible in
constructive mathematics is in fact the same idea used in proving that
it can be done in $\mathbf{ZF}$.

Suppose that we are given two sets $X$ and $Y$ together with a
bijection $X \times 2 \cong Y \times 2$. Following Sierpi\'nski
\cite{Sierpinski1922} we will think of the bijection as a permutation
$\theta$ of $(X \times 2) + (Y \times 2)$ of order 2. Namely, $\theta$
takes each element of $X \times 2$ to the corresponding element of
$Y \times 2$ and vice versa.

Note that $(X \times 2) + (Y \times 2) \cong (X + Y) \times 2$. We
will write $Z$ for $X + Y$, and so think of $\theta$ as a permutation
of $Z \times 2$.

There is another permutation of $Z \times 2$ that sends
$(z, i) \in Z \times 2$ to $(z, 1 - i)$. We denote this permutation
$\phi$.

Now we note that given any element $z$ of $Z \times 2$, we can define
a sequence $\chi_z \colon \ZZ \to 2$, as follows. Writing $\pi_1$ for
the projection $Z \times 2 \to 2$, we define $\chi_z(n)$ to be
$\pi_1((\phi \theta)^n \cdot z)$. The construction of the bijection $X
\cong Y$ proceeds by dividing into cases depending on the properties
of $\chi$. We refer to \cite{Bernstein1905}, \cite{Sierpinski1922} or
\cite{conwaydoyle} for details.

The main idea for our proof is that in general we always have the
computational information contained in $\chi_x$ available anyway. We
can therefore clarify the situation by making the dependence on the
signature explicit. We will take $X$ to be the even numbers, $2 \ZZ$,
and $Y$ to be the odd numbers $2 \ZZ + 1$. Instead of starting with a
bijection $\theta$ and then defining $\chi$, we will start off with an
arbitrary sequence $\chi \colon \ZZ \to 2$, and then define a
bijection $\theta_\chi$, as follows.
\begin{definition}
  Suppose we are given $\chi \colon \ZZ \rightarrow 2$. We define a
  bijection $\theta_\chi$ from $2 \ZZ \times 2$ to
  $(2 \ZZ + 1) \times 2$  as follows.
  \begin{equation*}
    \theta_\chi(n, i) :=
    \begin{cases}
      (n + 1, 1 - \chi(n + 1)) & i = \chi(n) \\
      (n - 1, \chi(n - 1)) & i \neq \chi(n) \\
    \end{cases}
  \end{equation*}  
\end{definition}

To see that $\theta_\chi$ is a bijection, note that we can
use exactly the same definition for its inverse (although in that case
the domain consists of odd numbers rather than even numbers).

Dividing by two gives us a way to transform each bijection
$\theta_\chi$ between $2 \ZZ \times 2$ and $(2 \ZZ + 1) \times 2$ into
a bijection between $2 \ZZ$ and $2 \ZZ + 1$.

Obviously there are always bijections between $2\ZZ$ and $2 \ZZ + 1$,
for instance by adding $1$. However, our proof will rest on the fact
that $\theta_\chi$ depends on $\chi$ equivariantly and continuously,
in a sense that we will define. We will see that these cannot both
hold for families of bijections between $2 \ZZ$ and $2 \ZZ + 1$.

\section{Equivariance}
\label{sec:equivariance}

Let $\dihedral$ be the infinite dihedral group. There are a few
different ways of defining this group. We will use the finite
presentation below.
\begin{equation*}
  \dihedral := \langle t, r \;|\; r^2 = 1, r t r = t^{-1}
  \rangle
\end{equation*}
It's helpful to note that $\dihedral$ is also the group of isometries of
$\ZZ$, with $t$ translation by $1$, and $r$ reflection in the origin,
say. For this reason, we will refer to elements of the form $t^n$ as
\emph{translations}, and elements of the form $r t^n$ as
\emph{reflections}.

In fact, we don't use the usual action of $\dihedral$ on $\ZZ$, but
instead define the action of translation
to be translation by $2$, so as to preserve odd and
even numbers. Explicitly, we define $t \cdot n$ to be $n + 2$ and we
define $r \cdot n$ to be $- n$.

We define an action on $2^\ZZ$ as follows. Suppose we are given an
element $\chi$ of $2^\ZZ$. We define $t \cdot \chi$ to be
$\lambda n.\chi(n - 2)$ and we define $r \cdot \chi$ to be
$\lambda n.\,1 -\chi(-n)$.
\begin{proposition}
  The above specifies a well defined action of $\dihedral$ on $\ZZ$
  which restricts to $2 \ZZ$ and $2 \ZZ + 1$, and a well defined
  action on $2^\ZZ$.
\end{proposition}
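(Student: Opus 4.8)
The plan is to invoke the universal property of the presentation: to specify an action of $\dihedral$ on a set $S$ it suffices to exhibit permutations $T$ and $R$ of $S$, interpreting the generators $t$ and $r$, such that $R \circ R = \mathrm{id}_S$ and $R \circ T \circ R = T^{-1}$; any such choice then extends uniquely to a group homomorphism from $\dihedral$ to the group of permutations of $S$, since these are precisely the defining relations. Thus the statement reduces to two routine verifications, one for the action on $\ZZ$ and one for the action on $2^\ZZ$, together with a check that the inclusions $2\ZZ \hookrightarrow \ZZ$ and $2\ZZ + 1 \hookrightarrow \ZZ$ are compatible with the first action.

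For the action on $\ZZ$, I would first note that $n \mapsto n + 2$ and $n \mapsto -n$ are bijections of $\ZZ$ (with inverses $n \mapsto n - 2$ and itself, respectively). Then $r \cdot (r \cdot n) = -(-n) = n$ gives $R \circ R = \mathrm{id}$, and the one-line computation $r \cdot (t \cdot (r \cdot n)) = -\bigl((-n) + 2\bigr) = n - 2$ gives $R \circ T \circ R = T^{-1}$. Since each of the two generating permutations preserves the parity of an integer, both restrict to bijections of $2\ZZ$ and of $2\ZZ + 1$, and the two relations continue to hold after restriction, so the action restricts to each of these subsets. Everything here is manifestly constructive: the parity of an integer is decidable and all maps involved are given by explicit formulas.

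For the action on $2^\ZZ$, I would similarly check that $\chi \mapsto \lambda n.\chi(n - 2)$ and $\chi \mapsto \lambda n.\,1 - \chi(-n)$ are bijections of $2^\ZZ$; the first has inverse $\chi \mapsto \lambda n.\chi(n + 2)$, and the second is its own inverse, a fact which coincides with the relation $R \circ R = \mathrm{id}$. The one place calling for a little care is the interaction between negating the argument and the $1-$ flip in the definition of $r \cdot \chi$: one must verify $r \cdot (r \cdot \chi) = \lambda n.\,1 - \bigl(1 - \chi(-(-n))\bigr) = \chi$ and, for the dihedral relation, $r \cdot \bigl(t \cdot (r \cdot \chi)\bigr) = \lambda n.\chi(n + 2) = t^{-1} \cdot \chi$, tracking the two sign changes and the two flips through the composite. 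This is the step where a dropped flip or an off-by-one slip is easiest to make, so it is the one worth writing out in full; none of it, however, is conceptually difficult.
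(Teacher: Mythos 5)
Your proposal is correct and takes essentially the same approach as the paper: the paper's proof simply states that it suffices to check the relations of the presentation and calls the verification straightforward, while you spell out those checks (correctly) in full. The computations $r\cdot(t\cdot(r\cdot n)) = n-2$ and $r\cdot(t\cdot(r\cdot\chi)) = \lambda n.\chi(n+2)$ are exactly the ones the paper leaves to the reader.
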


\begin{proof}
  To show these are well defined actions it suffices to check that they
  respect the equations in the finite presentation of $\dihedral$, which is
  straightforward.
\end{proof}

We also consider the trivial action on $2$, which then defines an
action on $2\ZZ \times 2$ and $(2\ZZ + 1) \times 2$.

\begin{lemma}
  Let $g \in \dihedral$ and $\chi \in 2^\ZZ$. Then $\theta_{g \cdot \chi}(g \cdot n) =
  g \cdot \theta_\chi(n)$.
\end{lemma}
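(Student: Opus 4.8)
The plan is to reduce to the two generators $t$ and $r$ of $\dihedral$ and then verify the identity by a direct case analysis matching the case split in the definition of $\theta_\chi$.

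First I would observe that the set of $g \in \dihedral$ for which the equation $\theta_{g\cdot\chi}(g\cdot n) = g\cdot\theta_\chi(n)$ holds for all $\chi$ and all $n$ is a submonoid of $\dihedral$: it obviously contains the identity, and if $g$ and $h$ both satisfy it then, using that we are dealing with genuine actions together with the hypotheses on $h$ and then $g$,
\begin{align*}
  \theta_{(gh)\cdot\chi}(gh\cdot n) &= \theta_{g\cdot(h\cdot\chi)}(g\cdot(h\cdot n)) = g\cdot\theta_{h\cdot\chi}(h\cdot n) \\
  &= g\cdot h\cdot\theta_\chi(n) = (gh)\cdot\theta_\chi(n).
\end{align*}
Since $r t r = t^{-1}$, the set $\{t,r\}$ generates $\dihedral$ even as a monoid, so it suffices to check the equation for $g = t$ and $g = r$.

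For $g = t$ I would unfold both sides at an arbitrary $(n,i) \in 2\ZZ \times 2$: on the left $t\cdot n = n+2$ and $(t\cdot\chi)(n+2) = \chi(n)$, so the case split for $\theta_{t\cdot\chi}$ at $(n+2,i)$ is governed by the same condition $i = \chi(n)$ as the case split for $\theta_\chi$ at $(n,i)$, and pushing the shift through $(t\cdot\chi)(n+1) = \chi(n-1)$ and $(t\cdot\chi)(n+3) = \chi(n+1)$ recovers exactly $t\cdot\theta_\chi(n,i)$, which just adds $2$ to the first coordinate. For $g = r$ the same strategy applies, but now $r\cdot n = -n$ and, crucially, $(r\cdot\chi)(-n) = 1 - \chi(n)$, so the condition $i = (r\cdot\chi)(-n)$ controlling $\theta_{r\cdot\chi}$ at $(-n,i)$ is the \emph{negation} $i \neq \chi(n)$ of the condition controlling $\theta_\chi$ at $(n,i)$ — the two branches get interchanged. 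One then checks that this interchange is exactly compensated by the reflection $m \mapsto -m$ on first coordinates and by the two complementations cancelling in the second coordinate (for instance $1 - (r\cdot\chi)(-n+1) = \chi(n-1)$), so that both sides again agree.

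The computations are routine; the only place requiring care is the reflection case, where the involution in the action on $2^\ZZ$ simultaneously flips the governing condition and introduces a complementation in the output, and one must confirm that these two effects cancel rather than reinforce. I expect this complementation bookkeeping to be the only real obstacle — there is no conceptual difficulty.
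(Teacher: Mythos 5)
Your proposal is correct and follows essentially the same route as the paper, which simply reduces to checking the equation on the generators $t$ and $r$; your extra observations (the submonoid/generation argument via $rtr = t^{-1}$, and the fact that the reflection case swaps the two branches of the definition of $\theta_\chi$ while the complementations cancel) are accurate fillings-in of what the paper calls ``straightforward.'' Nothing further is needed.
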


\begin{proof}
  It suffices to check this for the generators $t$ and $r$. Both are
  straightforward.
\end{proof}

\begin{proposition}
  \begin{enumerate}
  \item \label{li:naivenotequiv}
    The na\"ive bijections between $2 \ZZ$ and $2 \ZZ + 1$ defined
    by always adding $1$
    (or alternatively always subtracting $1$) are not equivariant in
    $2^\ZZ$.
  \item \label{li:bernsteinisequiv} There is an equivariant family of
    bijections between $2 \ZZ$ and $2 \ZZ + 1$ (assuming the axiom of
    excluded middle).
  \end{enumerate}
\end{proposition}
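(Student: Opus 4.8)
The plan is to handle the two parts separately. Part \ref{li:naivenotequiv} is essentially a one-line computation: write $f \colon 2\ZZ \to 2\ZZ + 1$ for the ``add $1$'' map, $f(n) = n + 1$; as a family indexed by $2^\ZZ$ it is constant in $\chi$, so its equivariance would amount to $\dihedral$-equivariance of $f$ itself, $f(g \cdot n) = g \cdot f(n)$. Testing $g = r$ at $n = 0$ gives $f(r \cdot 0) = f(0) = 1$ on one side but $r \cdot f(0) = r \cdot 1 = -1$ on the other, so equivariance fails, and the ``subtract $1$'' map fails at the same point. I would also record the cleaner fact that \emph{no} $\chi$-independent family of bijections can be equivariant: $\langle t \rangle$ already acts transitively on $2\ZZ$ with $t^k \cdot 0 = 2k$, so such a family is pinned down by its value at $0$, and the stabiliser element $r$ of $0$ forces $r \cdot f(0) = f(0)$, i.e.\ $f(0) = -f(0)$, i.e.\ $f(0) = 0 \notin 2\ZZ + 1$. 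This makes clear that the dependence on $\chi$ is essential for part \ref{li:bernsteinisequiv}.

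For part \ref{li:bernsteinisequiv}, the idea is to run Bernstein's (or Sierpi\'nski's) argument on the family $\theta_\chi$. First extend $\theta_\chi$ to a permutation $\widehat\theta_\chi$ of $\ZZ \times 2 \cong 2\ZZ \times 2 + (2\ZZ + 1) \times 2$ of order $2$ by applying its defining formula verbatim on both halves (this is exactly the remark following the definition that $\theta_\chi$ and its inverse have the same description); then $\widehat\theta_\chi$ swaps $2\ZZ \times 2$ with $(2\ZZ + 1) \times 2$. Assuming excluded middle, applying the classical division-by-two construction to $\widehat\theta_\chi$, with $\phi$ as in Section \ref{sec:basic}, yields a bijection $b_\chi \colon 2\ZZ \to 2\ZZ + 1$. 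Excluded middle is used precisely in the case split that construction performs on the sequences $\chi_z(n) = \pi_1((\phi \widehat\theta_\chi)^n \cdot z)$ — deciding, for instance, whether the $\phi \widehat\theta_\chi$-orbit of $z$ is finite, whether it is two-sided, and the relevant parity conditions.

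It then remains to see that $(b_\chi)_\chi$ is equivariant, i.e.\ $b_{g \cdot \chi}(g \cdot n) = g \cdot b_\chi(n)$. The crucial observation is that the combinatorial data feeding the construction is carried along by the $\dihedral$-action. From the preceding Lemma (applied also to the inverse of $\theta_\chi$) we get $\widehat\theta_{g \cdot \chi}(g \cdot z) = g \cdot \widehat\theta_\chi(z)$, and $\phi$ commutes with the action because the action on the $2$-coordinate is trivial; hence $(\phi \widehat\theta_{g \cdot \chi})^n(g \cdot z) = g \cdot (\phi \widehat\theta_\chi)^n(z)$, and applying $\pi_1$ (again untouched by $g$) shows that the sequence attached to $g \cdot z$ for the parameter $g \cdot \chi$ is literally \emph{equal} to the one attached to $z$ for $\chi$; likewise $g$ induces a bijection between the $\phi \widehat\theta_\chi$-orbits and the $\phi \widehat\theta_{g \cdot \chi}$-orbits. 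Since Bernstein's bijection is defined, orbit by orbit, purely in terms of these sequences and of the combinatorial position of a point inside its orbit, the value it assigns to $g \cdot n$ for $g \cdot \chi$ is $g$ applied to the value it assigns to $n$ for $\chi$, which is exactly the desired equivariance.

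The main obstacle is turning that last sentence into a rigorous proof: one has to recall the Bernstein/Sierpi\'nski case analysis and check, case by case, that each branch commutes with relabelling by $\dihedral$. I expect the cleanest route is to reproduce Sierpi\'nski's explicit formula for $b_\chi$ in terms of the orbits of $\phi \widehat\theta_\chi$ and then verify equivariance of that formula directly, using the invariance of the sequences $\chi_z$ noted above; the remaining bookkeeping is routine.
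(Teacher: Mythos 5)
Your proposal is correct, and for part \ref{li:bernsteinisequiv} it takes the concrete route that the paper mentions but deliberately does not carry out. For part \ref{li:naivenotequiv} the paper simply leaves a direct proof as an exercise; your computation (testing $g = r$ at $n = 0$, where $r$ fixes $0$ but sends $\pm 1$ to $\mp 1$) is exactly that exercise, and your stronger observation --- that \emph{no} $\chi$-independent family can be equivariant, because $r$ stabilises $0$ and so would force $f(0) = -f(0) = 0 \notin 2\ZZ+1$ --- is a nice addition that explains why the dependence on $\chi$ is essential. For part \ref{li:bernsteinisequiv} the paper's actual proof is abstract: $\gsets{\dihedral}$ is a Boolean topos, so Bernstein's argument (a theorem of $\mathbf{ZF}$, hence of constructive set theory plus excluded middle) can be run in its internal logic, and an internal bijection over $2^\ZZ$ \emph{is} an equivariant family. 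Your approach instead unfolds this: you extend $\theta_\chi$ to the involution $\widehat\theta_\chi$ of $\ZZ \times 2$, note that $\widehat\theta_\chi$ and $\phi$ both commute with the $\dihedral$-action in the appropriate sense, deduce that the sequences $\chi_z$ and the $\phi\widehat\theta_\chi$-orbits are transported by $g$, and conclude that Bernstein's orbit-by-orbit case analysis commutes with the relabelling. That is sound, but the ``routine bookkeeping'' you defer is precisely the work the internal-logic argument is designed to eliminate: the general principle is that any construction performed uniformly from the data, without arbitrary choices, is automatically natural under isomorphisms of that data, and Booleanness of $\gsets{\dihedral}$ is what licenses the excluded-middle case splits there. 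Your version buys concreteness and an explicit formula; the paper's buys a two-line proof and robustness against errors in the case analysis. Both are valid.
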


\begin{proof}
  We leave a direct proof of \ref{li:naivenotequiv} as an exercise for
  the reader, although it will also follow from the arguments we will
  use in section \ref{sec:nonexistence}.

  For \ref{li:bernsteinisequiv}, we leave it as an exercise for the
  reader to give a direct proof based on Bernstein's argument, but we
  also give an abstract proof based on the ideas that we will see in
  section \ref{sec:maketopos}: note that if excluded middle holds,
  then it also holds internally in the topos of $\dihedral$-sets,
  i.e. $\gsets{\dihedral}$ is a boolean topos. We can therefore carry
  out Bernstein's argument in the internal logic of $\gsets{\dihedral}$
  to get an equivariant bijection.
\end{proof}

\section{Continuity}
\label{sec:continuity}

Recall that Baire space is defined to be the topological space on the
set $\nat^\nat$ with the product topology on $\nat$ copies
of $\nat$ with the discrete topology.
It's often useful (in both constructive and classical
mathematics) to give an explicit definition of continuity as follows.

\begin{proposition}
  A function $F \colon \baire \to \nat$ is continuous iff for
  all $\alpha \in \baire$ there exists $N \in \nat$ such that for all
  $\beta \in \baire$ if $\alpha(n) = \beta(n)$ for all $n < N$, then
  $F(\alpha) = F(\beta)$.
\end{proposition}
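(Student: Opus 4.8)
The plan is to run the standard translation between continuity and the behaviour of $F$ on a basis of open sets. A convenient basis for the product topology on $\baire$ is the collection of \emph{cylinder sets}: for a finite sequence $s = (s_0,\dots,s_{N-1})$ of natural numbers, set
\[
  B_s := \{\alpha \in \baire : \alpha(n) = s_n \text{ for all } n < N\}.
\]
First I would check that these genuinely form a basis. Each $B_s$ equals $\bigcap_{n<N}\pi_n^{-1}(\{s_n\})$, a finite intersection of subbasic opens, hence open. Conversely, a basic open set of the product topology is obtained by constraining finitely many coordinates $n_1 < \dots < n_j$; given any point $\alpha$ of such a set, the cylinder $B_{(\alpha(0),\dots,\alpha(N-1))}$ for any $N > n_j$ is contained in it. I will also use that, since $\nat$ carries the discrete topology, $F$ is continuous iff $F^{-1}(\{m\})$ is open for every $m \in \nat$: every open $U \subseteq \nat$ satisfies $U = \bigcup_{m \in U}\{m\}$, so $F^{-1}(U) = \bigcup_{m \in U} F^{-1}(\{m\})$.

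For the forward direction, assume $F$ is continuous and fix $\alpha \in \baire$; put $m := F(\alpha)$. Then $\alpha \in F^{-1}(\{m\})$, which is open, so by the basis property it contains some cylinder $B_s$ with $\alpha \in B_s$. Writing $N$ for the length of $s$, necessarily $s = (\alpha(0),\dots,\alpha(N-1))$. Hence any $\beta \in \baire$ with $\alpha(n) = \beta(n)$ for all $n < N$ lies in $B_s \subseteq F^{-1}(\{m\})$, so $F(\beta) = m = F(\alpha)$, which is the stated condition.

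For the converse, assume the stated condition, fix $m \in \nat$, and take $\alpha \in F^{-1}(\{m\})$. The hypothesis supplies $N \in \nat$ such that $F(\beta) = F(\alpha) = m$ whenever $\alpha(n) = \beta(n)$ for all $n < N$; that is, $\alpha \in B_{(\alpha(0),\dots,\alpha(N-1))} \subseteq F^{-1}(\{m\})$. So $F^{-1}(\{m\})$ contains a basic open neighbourhood of each of its points, hence is open, and by the reduction above $F$ is continuous. Both directions go through verbatim in constructive mathematics. The closest thing to an obstacle is the one genuinely non-formal step, namely verifying that the cylinders form a basis — concretely, passing between ``finitely many, possibly non-initial, coordinates constrained'' and ``an initial segment constrained'' — and this is exactly what choosing $N$ larger than all the relevant indices accomplishes.
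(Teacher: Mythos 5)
Your proof is correct and is the standard textbook argument (cylinders as a basis, plus the reduction of continuity into a discrete codomain to openness of preimages of singletons). The paper states this proposition without any proof at all, treating it as a well-known recollection, so there is nothing to compare against; your write-up fills that gap adequately, and your remark about the only non-trivial step being the passage from arbitrary finite coordinate constraints to initial-segment constraints is exactly right.
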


\begin{proposition}
  The bijections $(\theta_\chi)_{\chi \in 2^\ZZ}$ that we defined in
  section \ref{sec:basic} are continuous when viewed as a single
  function from $2^\ZZ \times 2 \ZZ$ to $2^\ZZ \times (2 \ZZ + 1)$.

  More explicitly, for every $\chi$ and every $n \in \ZZ$, there
  exists $N \in \nat$ such that for all $\chi' \in 2^\ZZ$, if
  $\chi'(m) = \chi(m)$ for all $m$ with $|m| < N$, then
  $\theta_{\chi}(n) = \theta_{\chi'}(n)$.
\end{proposition}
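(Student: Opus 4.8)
The plan is to verify the explicit criterion stated in the proposition directly, exploiting the observation that the defining formula for $\theta_\chi(n,i)$ only ever inspects the three values $\chi(n-1)$, $\chi(n)$ and $\chi(n+1)$, and hence is a finite-information computation in $\chi$.

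First I would record why the explicit pointwise statement suffices for the topological claim. When $\theta_\chi$ is packaged as a single function, its domain is the product of $2^\ZZ$ with the discrete spaces $2\ZZ$ and $2$ (matching the domain $2\ZZ \times 2$ of $\theta_\chi$ from the definition), so a basic open neighbourhood of a point $(\chi, n, i)$ is obtained by fixing the discrete coordinates $n$ and $i$ and fixing finitely many coordinates of $\chi$; since $\ZZ$ is exhausted by the intervals $\{m : |m| < N\}$, we may take the neighbourhood to be $\{(\chi', n, i) : \chi'(m) = \chi(m) \text{ for all } |m| < N\}$. In the codomain $2^\ZZ \times (2\ZZ+1) \times 2$, the projection to $2^\ZZ$ precomposed with our function is just the first projection of the domain, hence continuous, while the projection to the discrete space $(2\ZZ+1)\times 2$ precomposed with our function is continuous exactly when it is locally constant in the above sense — which is precisely the ``more explicit'' statement. (Alternatively one identifies everything with subspaces of $\baire$ and invokes the preceding proposition; the bookkeeping is identical.)

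Then the substance is the choice of modulus. Fixing $\chi \in 2^\ZZ$ and $(n,i) \in 2\ZZ \times 2$, I would set $N := |n| + 2$. If $\chi' \in 2^\ZZ$ agrees with $\chi$ on all $m$ with $|m| < N$, then since $|n-1|, |n|, |n+1| \le |n| + 1 < N$ we get $\chi'(n-1) = \chi(n-1)$, $\chi'(n) = \chi(n)$ and $\chi'(n+1) = \chi(n+1)$. Inspecting the definition of $\theta$: the case distinction ``$i = \chi(n)$'' versus ``$i \neq \chi(n)$'' selects the same branch for $\chi$ and for $\chi'$ because $\chi(n) = \chi'(n)$; and in the first branch the output is $(n+1, 1-\chi(n+1)) = (n+1, 1-\chi'(n+1))$, while in the second it is $(n-1, \chi(n-1)) = (n-1, \chi'(n-1))$. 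Either way $\theta_\chi(n,i) = \theta_{\chi'}(n,i)$, and the $2^\ZZ$-component is carried along unchanged, which is the required equality.

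I do not expect any real obstacle. The heart of the argument is the trivial remark that $\theta_\chi(n,i)$ depends on only three values of $\chi$; the only points needing a little care are the translation between the topological formulation and the $N$-formulation (i.e. confirming the shape of basic opens in a product of $2^\ZZ$ with discrete spaces), and checking that $N := |n| + 2$ really does place all of $n-1$, $n$, $n+1$ inside the interval $\{m : |m| < N\}$.
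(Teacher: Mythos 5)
Your proof is correct and is exactly the argument the paper has in mind when it says the claim is ``clear from the definition'': $\theta_\chi(n,i)$ depends only on the three values $\chi(n-1)$, $\chi(n)$, $\chi(n+1)$, so the modulus $N = |n|+2$ works. The extra care you take in translating between the topological and the explicit $N$-formulations is sound but adds nothing beyond what the paper leaves implicit.
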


\begin{proof}
  This is clear from the definition.
\end{proof}

We say that $\theta_\chi$ is \emph{continuous in $2^\ZZ$}.

We make the following observation. A direct proof is left as an
exercise for the reader, although it will also follow as a corollary
from the arguments in the next section.
\begin{proposition}
  The bijections between $2 \ZZ$ and $2 \ZZ + 1$ resulting from Bernstein's
  construction are not continuous in $2^\ZZ$.
\end{proposition}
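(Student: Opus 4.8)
The plan is to prove the general statement that no family of bijections $(f_\chi)_{\chi\in 2^\ZZ}$ between $2\ZZ$ and $2\ZZ+1$ can be both equivariant, for the $\dihedral$-actions of Section~\ref{sec:equivariance}, and continuous in $2^\ZZ$; the proposition then follows, since (as shown in Section~\ref{sec:equivariance}, item~\ref{li:bernsteinisequiv}, using excluded middle) Bernstein's construction \emph{does} produce an equivariant family. So suppose, towards a contradiction, that $(f_\chi)_{\chi\in 2^\ZZ}$ is a family of bijections $2\ZZ\to 2\ZZ+1$ which is both equivariant and continuous.

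The first step is to turn continuity into a uniform local rule. By the continuity hypothesis, $f_\chi(0)$ depends only on the values $\chi(m)$ for $m$ in some window $[-N,N]$, with $N$ a priori depending on $\chi$; since $2^\ZZ$ is compact, a single $N$ works for all $\chi$, so that $f_\chi(0) = \Phi(\chi(-N),\dots,\chi(N))$ for a function $\Phi$ taking finitely many (necessarily odd) values. Translation equivariance, i.e.\ $f_{t\cdot\chi}(n+2)=f_\chi(n)+2$, then propagates this to $f_\chi(2k) = 2k + \Phi(\chi(2k-N),\dots,\chi(2k+N))$ for all $k$. Next, since $r$ fixes $0\in 2\ZZ$ and acts on $2\ZZ+1$ by negation, equivariance gives $f_{r\cdot\chi}(0) = -f_\chi(0)$; feeding in the constant function $\mathbf{0}$, whose image under $r$ is the constant function $\mathbf{1}$, yields $\Phi(\mathbf{1}) = -\Phi(\mathbf{0})$ (writing $\mathbf{0},\mathbf{1}$ also for the constant windows). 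Set $d := \Phi(\mathbf{0})$, so that $d$ is odd and $\Phi(\mathbf{1}) = -d$.

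Finally I would apply the local rule to the indicator function $\chi$ of $\{m\in\ZZ : m\ge 0\}$. For all sufficiently negative $2k$ the window is constantly $0$, so $f_\chi(2k)=2k+d$; for all sufficiently positive $2k$ the window is constantly $1$, so $f_\chi(2k)=2k-d$; and exactly $N$ values of $2k$ lie in between. Thus the image of $f_\chi$ consists of all odd integers below some threshold (from the left), all odd integers above some threshold (from the right), and $N$ further odd integers coming from the middle; bijectivity of $f_\chi$ forces these to partition $2\ZZ+1$, while a direct count shows that the gap between the two thresholds contains exactly $N-d$ odd integers, which the $N$ middle values would have to cover bijectively --- impossible unless $d=0$, contradicting that $d$ is odd. (The degenerate cases fit the same pattern: if $d\ge N+1$ the two half-lines of values overlap and $f_\chi$ is not injective, and if $d<0$ the gap is too large and $f_\chi$ is not surjective.) The step I expect to be the main obstacle is the first one: upgrading the pointwise continuity hypothesis to a genuinely uniform bounded-window rule valid at every coordinate at once, which is exactly where compactness of $2^\ZZ$ must be combined with translation equivariance, and is what lets the final counting argument get off the ground; after that, only the arithmetic of the gap count needs care.
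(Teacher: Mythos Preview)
Your overall strategy matches the paper's exactly: the paper also deduces the proposition from the combination of Proposition~\ref{li:bernsteinisequiv} (Bernstein's family is equivariant) and Theorem~\ref{thm:noequivhomeo} (no family can be both continuous and equivariant). The difference is in how the non-existence lemma is proved. You invoke compactness of $2^\ZZ$ to obtain a single uniform window $[-N,N]$ controlling $f_\chi(0)$ for all $\chi$, then propagate by translation and test against the increasing step function $\mathbf{1}_{[0,\infty)}$. The paper instead applies pointwise continuity only at the single input $(-\infty,0)$ to extract an $N$, and then uses equivariance under $t^n$ and $t^n r$ to determine $\varphi_{\underline{0}}$ on both tails (Lemma~\ref{lem:eventuallylinear}); the final parity count is essentially the same as yours. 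What the paper's route buys is that it never appeals to compactness or uniform continuity, so the argument is fully constructive---a point the paper needs, since Theorem~\ref{thm:noequivhomeo} is later re-run inside the internal logic of a topos. Your compactness step is harmless here (the proposition lives in classical mathematics anyway, since Bernstein's construction already uses excluded middle), but it is worth noting that the ``main obstacle'' you flagged can in fact be sidestepped entirely: equivariance alone promotes continuity at one point to the eventual-linearity you need, with no uniform bound required.
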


\section{The Non-Existence of a Continuous and Equivariant Family of
  Bijections}
\label{sec:nonexistence}

We aim towards a proof that there is no continuous and equivariant
family of bijections between $2 \ZZ$ and $2 \ZZ + 1$ in the sense that
we defined in previous sections (this will be theorem
\ref{thm:noequivhomeo}). It will be important for future sections that
this proof is entirely constructive (and so valid in the internal
logic of a topos).

Our first observation is that we do not need to consider all
elements of $2^\ZZ$, but only those $\chi$ that are decreasing. We use
the following notation for some of these elements. We write $- \infty$
for the sequence which is constantly $0$, $\infty$ for the sequence
which is constantly equal to $1$, and given $n \in \ZZ$, we write
$\underline{n}$ for the sequence defined as below.
\begin{equation*}
  \underline{n}(i) :=
  \begin{cases}
    1 & i < n \\
    0 & i \geq n
  \end{cases}
\end{equation*}
We will write the set of decreasing sequences as $\ZZ_\infty$.

Although we don't formally need the following two observations, they
help illustrate the motivation for this definition and notation.
\begin{enumerate}
\item For any $m < n$ in $\ZZ$, in the pointwise ordering on $2^\ZZ$,
  $- \infty < \underline{m} < \underline{n} < \infty$.
\item In classical logic, every decreasing binary sequence on $\ZZ$ is
  either of the form $- \infty$, $\infty$ or $\underline{n}$ for some
  $n \in \ZZ$. In fact this is equivalent to one of Brouwer's
  omniscience principles, the \emph{limited principle of omniscience
    (LPO)}, which states that for every binary sequence $\alpha \colon
  \nat \to 2$, either $\alpha(n) = 0$ for every $n \in \nat$, or
  $\alpha(n) = 1$ for some $n \in \nat$.
\end{enumerate}

Throughout this section, we assume that we are given a family of
bijections $\varphi_\chi \colon 2 \ZZ \to 2 \ZZ + 1$ indexed by
elements of $\ZZ_\infty$ that are both continuous and equivariant in
$\ZZ_\infty$, in the sense that we defined in the previous
sections. Clearly if we are given a continuous equivariant family of
bijections indexed by all elements of $2^\ZZ$, we could just
restrict to get a continuous equivariant family of bijections indexed
over $\ZZ_\infty$.

We will often view the family of bijections $\varphi$ as a function
$\ZZ_\infty \times 2 \ZZ \,\to\, \ZZ_\infty \times (2 \ZZ + 1)$ that forms
part of the following commutative triangle.

\begin{equation*}
  \xymatrix{ \ZZ_\infty \times 2 \ZZ \ar[rr]^{\varphi} \ar[dr]_{\pi_0} & &
    \ZZ_\infty \times (2 \ZZ + 1) \ar[dl]^{\pi_0} \\
    & \ZZ_\infty &}
\end{equation*}




\begin{lemma}
  \label{lem:eventuallylinear}
  There exists $k \in \ZZ$ and $N > 0$ such that for all $n > N$,
  $\varphi(\underline{0}, n) = (\underline{0}, n + k)$, and for all
  $n < -N$, $\varphi(\underline{0}, n) = (\underline{0}, n - k)$.
\end{lemma}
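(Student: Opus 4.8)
The plan is to reduce everything to the two "endpoint" sequences $\pm\infty$, whose behaviour is pinned down by their large stabilizers, and then transport that information to $\underline{0}$ using equivariance under translations. Throughout I use that the commutative triangle forces $\pi_0\circ\varphi=\pi_0$, so that $\varphi(\chi,n)=(\chi,\varphi_\chi(n))$ for a bijection $\varphi_\chi\colon 2\ZZ\to 2\ZZ+1$; the statement is then purely about $\varphi_{\underline 0}$.

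First I would analyse $\varphi_{\pm\infty}$. One checks directly that $t$ fixes both $\infty$ and $-\infty$, that $r$ swaps them, and hence that the stabilizer of each in $\dihedral$ is $\langle t\rangle$. Restricting the equivariance identity to $g=t$ gives $\varphi_\infty(n+2)=\varphi_\infty(n)+2$ for all $n\in 2\ZZ$; since $2\ZZ$ is a single $\langle t\rangle$-orbit, this forces $\varphi_\infty(n)=n+c_\infty$ for the odd constant $c_\infty:=\varphi_\infty(0)$, and likewise $\varphi_{-\infty}(n)=n+c_{-\infty}$. Applying equivariance to $g=r$, together with $r\cdot\infty=-\infty$, yields $\varphi_{-\infty}(-n)=-\varphi_\infty(n)$, i.e. $c_{-\infty}=-c_\infty$.

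Next I would use continuity, observing that it only constrains behaviour near $\pm\infty$: every $\underline n$ is isolated in $\ZZ_\infty$, so the real content is at the two limit points. Continuity of $\varphi$ at $(\infty,0)$ supplies $N^+$ such that $\varphi_{\chi'}(0)=\varphi_\infty(0)$ whenever $\chi'$ agrees with $\infty$ on $\{\,|i|<N^+\,\}$; since $\underline{2k}$ agrees with $\infty$ on that set as soon as $2k\ge N^+$, we get $\varphi_{\underline{2k}}(0)=c_\infty$ for all large $k$. Symmetrically, continuity at $(-\infty,0)$ gives $N^-$ with $\varphi_{\underline{2k}}(0)=c_{-\infty}$ for all sufficiently negative $k$. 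Finally I would transport this back: since $t^k\cdot\underline 0=\underline{2k}$, equivariance at $t^k$ gives $\varphi_{\underline{2k}}(0)=\varphi_{\underline 0}(-2k)+2k$. Combining with the previous sentence, $\varphi_{\underline 0}(m)=m+c_{-\infty}$ for all sufficiently large positive even $m$, and $\varphi_{\underline 0}(m)=m+c_\infty=m-c_{-\infty}$ for all sufficiently negative even $m$; setting $k:=c_{-\infty}\in\ZZ$ and $N:=\max(N^+,N^-,1)$ and recalling $\varphi(\underline 0,n)=(\underline 0,\varphi_{\underline 0}(n))$ gives exactly the claim.

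The step I expect to be the main obstacle is the right bookkeeping in the last paragraph: one must evaluate the translated families $\varphi_{\underline{2k}}$ at the \emph{fixed} point $0$ (not at a point that drifts with $k$), since only then is the continuity modulus $N^\pm$ — which depends on the evaluation point — actually usable. The conceptual point underlying this, that continuity in $\ZZ_\infty$ carries information only at $\pm\infty$ and that the eventual linearity of $\varphi_{\underline 0}$ at each end must therefore be imported from $\varphi_{\pm\infty}$ by conjugating with translations, is the heart of the argument; everything else is routine computation with the explicit $\dihedral$-action.
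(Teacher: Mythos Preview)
Your argument is correct and follows essentially the same strategy as the paper: use continuity at an endpoint to determine $\varphi_{\underline{m}}(0)$ for $|m|$ large, then transport back to $\varphi_{\underline{0}}$ via translation equivariance. The only structural difference is that the paper works with the single endpoint $-\infty$ and obtains the negative-$n$ case by applying the composite $t^n r$ (so a single continuity modulus suffices and the preliminary analysis of $\varphi_{\pm\infty}$ as linear maps is never needed), whereas you treat both endpoints symmetrically, deduce $c_{-\infty}=-c_\infty$ from $r$-equivariance beforehand, and then use only translations; both routes are valid and yield the same $k=c_{-\infty}=\pi_1(\varphi(-\infty,0))$.
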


\begin{proof}
  Let $k$ be such that $\varphi(-\infty, 0) = (-\infty, k)$.
  
  Since $\varphi$ is a continuous function,
  $\pi_1 \circ \varphi$ is also continuous as a function
  $\ZZ_\infty \times 2 \ZZ \rightarrow \ZZ$.  Hence there exists
  $N > 0$ such that for all $\chi$, if $\chi(i) = 0$ for $|i| < N$
  then we have
  $\pi_1 (\varphi(\chi, 0)) = \pi_1(\varphi(-\infty, 0))$.
  It clearly follows that $\varphi(-\underline{n}, 0) =
  \varphi(-\infty, 0)$ for $n \leq -N$.

  Then we have the following for every even number $2n$ with $2n
  > N$.
  \begin{align*}
    \pi_1(\varphi(\underline{0}, 2 n)) &= \pi_1(\varphi( t^n \cdot
                                         (\underline{-2 n}, 0)))
    \\
                                     &= t^n \cdot \pi_1 (\varphi(
                                       \underline{-2n}, 0)) \\
                                     &= t^n \cdot \pi_1(\varphi( -\infty, 0)) \\
                                     &= t^n \cdot \pi_1(-\infty, k) \\
                                     &= 2 n + k
  \end{align*}
  For $2n < -N$ we have $2n + 1 \leq -N$, and hence,
  \begin{align*}
    \pi_1(\varphi(\underline{0}, 2n))
    &= \pi_1(\varphi(t^n \cdot (\underline{-2n}, 0))) \\
    &= \pi_1(\varphi(t^n r \cdot (\underline{2n + 1}, 0))) \\
    &= t^{n} r \cdot \pi_1(\varphi(\underline{2n + 1}, 0)) \\
    &= t^n r \cdot \pi_1(\varphi(-\infty, 0)) \\
    &= t^n r \cdot \pi_1(-\infty, k) \\
    &= 2n - k
  \end{align*}
\end{proof}

\begin{theorem}
  \label{thm:noequivhomeo}
  There is no family of bijections between $2 \ZZ$ and $2 \ZZ + 1$
  that is continuous and equivariant in $\ZZ_\infty$.
\end{theorem}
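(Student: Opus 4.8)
The plan is to reach a contradiction from Lemma~\ref{lem:eventuallylinear} by a finite counting argument, carried out constructively. Fix the sequence $\underline{0} \in \ZZ_\infty$ and write $\psi \colon 2\ZZ \to 2\ZZ + 1$ for the bijection $\varphi(\underline{0}, -)$. Lemma~\ref{lem:eventuallylinear} gives $k \in \ZZ$ and $N > 0$ with $\psi(n) = n + k$ for all even $n > N$ and $\psi(n) = n - k$ for all even $n < -N$. The first observation is that $k$ is odd: picking any even $n > N$, the value $\psi(n) = n + k$ must lie in $2\ZZ + 1$, so $k$ is odd, and in particular $k \neq 0$. (Oddness of an integer is decidable, so this step is unproblematic constructively.)

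Next I would transport $\psi$ to a self-bijection of $\ZZ$. Let $g \colon \ZZ \to \ZZ$ be the composite of the bijections $\ZZ \to 2\ZZ$, $j \mapsto 2j$, then $\psi$, then $2\ZZ + 1 \to \ZZ$, $m \mapsto (m-1)/2$; explicitly $g(j) = (\psi(2j) - 1)/2$. Substituting the tail formulas for $\psi$, one gets $g(j) = j + a$ for all $j > N$, with $a = (k-1)/2$, and $g(j) = j + b$ for all $j < -N$, with $b = -(k+1)/2$; note that $a - b = k$.

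The heart of the matter is the elementary fact that a bijection $g \colon \ZZ \to \ZZ$ which agrees with the translation $j \mapsto j + a$ on $\{ j : j > N \}$ and with $j \mapsto j + b$ on $\{ j : j < -N \}$ must satisfy $a = b$. To prove this, fix $M > \max(N, |k|)$. Injectivity of $g$ shows that $g$ maps $\{ j : j > M \}$ bijectively onto $\{ m : m > M + a \}$ and $\{ j : j < -M \}$ bijectively onto $\{ m : m < -M + b \}$, and the choice $M > |k|$ makes these two rays disjoint. Using surjectivity of $g$ together with the decidability of the order on $\ZZ$, each integer outside those two rays has a $g$-preimage in $\{ -M, \dots, M \}$, so $g$ restricts to a bijection from $\{-M, \dots, M\}$ onto the interval $\{ m : -M + b \leq m \leq M + a \}$. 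Comparing sizes gives $2M + 1 = 2M + (a - b) + 1$, hence $a - b = 0$ and $k = 0$, contradicting that $k$ is odd.

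I expect no conceptual difficulty here, only bookkeeping: the one point to be careful about is verifying that the threshold $M > \max(N, |k|)$ really does force the two rays $\{ m : m > M + a\}$ and $\{ m : m < -M + b\}$ to be disjoint and their common complement to be the stated finite interval, irrespective of the signs of $a$ and $b$. Since every step — the finite cardinality computation, the case distinctions on the (decidable) order of $\ZZ$, and the appeal to the constructively proved Lemma~\ref{lem:eventuallylinear} — is constructively valid, the resulting impossibility holds in the internal logic of any topos, which is what will be needed in the later sections.
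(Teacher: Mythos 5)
Your proposal is correct and follows essentially the same route as the paper: both apply Lemma~\ref{lem:eventuallylinear} at $\underline{0}$ and derive a contradiction by a finite counting argument on the complement of the two rays where $\varphi_{\underline{0}}$ is eventually a translation by $\pm k$. The only cosmetic difference is that you transport to a self-bijection of $\ZZ$ and compare exact cardinalities to force $k = 0$ against $k$ odd, whereas the paper stays with $2\ZZ$ and $2\ZZ+1$ and contrasts the odd cardinality of $[-N,N]\cap 2\ZZ$ with the even cardinality of $[-N-k,N+k]\cap(2\ZZ+1)$.
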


\begin{proof}
  In lemma \ref{lem:eventuallylinear} we showed that there
  exists $N > 0$ (which is even without loss of generality) and
  $k \in \ZZ$ (which is necessarily odd) such that for $n > 0$,
  $\varphi_{\underline{0}}(n) = n + k$ and for $n < -N$, $\varphi_{\underline{0}}(n) = n -
  k$. Furthermore, without loss of generality $N > |k|$.

  We deduce that the image of the set
  $((-\infty, -N -2] \cup [N + 2, \infty))\cap 2\ZZ$ under
  $\varphi_{\underline{0}}$ must be
  $((-\infty, N - k - 2] \cup [N + k + 2, \infty)) \cap (2 \ZZ + 1)$.
  Since $\varphi_{\underline{0}}$
  is a bijection, it follows that the image of the set $[-N, N] \cap 2\ZZ$ is
  $[-N - k, N + k] \cap (2\ZZ + 1)$.
  However, the cardinality of $[-N, N] \cap 2\ZZ$
  is odd, because for each even number $n$ with $0 < n \leq N$ it
  contains $n$ and $-n$, and it also contains $0$. On the other hand
  the cardinality of $[-N - k, N + k] \cap 2\ZZ + 1$ is even because
  we can still pair up each element with its negation, but it does not
  contain $0$.
\end{proof}

\section{Construction of the Topos}
\label{sec:maketopos}

We first consider the topos $\gsets{\dihedral}$ of sets with $\dihedral$-action where
$\dihedral$ is the infinite dihedral group. Recall that an object is a set $X$
together with an action of $\dihedral$ on $X$, and a morphism is a function
that preserves the action.

Our first steps will look a little strange to readers unfamiliar with
constructive mathematics.

\begin{lemma}
  \label{lem:allfuncontretract}
  Suppose that every function from $\baire$ to $\nat$ is continuous
  (this is sometimes referred to as \emph{Brouwer's continuity
    principle} or just \emph{Brouwer's principle}). Then the same
  is true for every retract of $\baire$.
\end{lemma}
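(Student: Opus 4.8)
The plan is to reduce continuity of an arbitrary function out of the retract to continuity of an associated function out of $\baire$ itself, where Brouwer's principle applies verbatim, and then to transport the conclusion back along the section. So the whole argument is: borrow continuity from $\baire$ via the retraction, and carry it over to the retract via the section.

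Concretely, to say that $R$ is a retract of $\baire$ is to give continuous maps $s \colon R \to \baire$ and $r \colon \baire \to R$ with $r \circ s = \mathrm{id}_R$. Let $F \colon R \to \nat$ be an arbitrary function; we must show $F$ is continuous. First I would form the composite $F \circ r \colon \baire \to \nat$. This is a function from Baire space to $\nat$, so by hypothesis (Brouwer's principle) it is continuous. Then I would observe that
\begin{equation*}
  F = F \circ \mathrm{id}_R = F \circ (r \circ s) = (F \circ r) \circ s .
\end{equation*}
Since $s$ is continuous and $F \circ r$ is continuous, and the composite of continuous maps of topological spaces is continuous, $F$ is continuous, as required.

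There is essentially no hard step here; the only points worth being careful about are that continuity is closed under composition — which holds constructively — and that, since $R$ need not itself be (homeomorphic to) $\baire$, one cannot apply Brouwer's principle to $F$ directly, so pre-composing with $r$ is exactly what makes the principle usable. If one prefers to avoid appealing to closure under composition as a black box, the same conclusion can be extracted directly from the explicit $\epsilon$–$N$ description of continuity recorded above: given $x \in R$, apply continuity of $F \circ r$ at the point $s(x)$ to obtain an $N$ such that $F(r(\beta)) = F(r(s(x)))$ whenever $\beta$ agrees with $s(x)$ on the first $N$ coordinates, then use continuity of $s$ at $x$ to find a neighbourhood of $x$ on which $s$ leaves those first $N$ coordinates fixed; on that neighbourhood $F$ is constantly $F(x)$, so $F$ is continuous at $x$.
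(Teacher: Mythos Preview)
Your argument is correct and is essentially identical to the paper's: both precompose an arbitrary $F$ with the retraction to obtain a map out of $\baire$, invoke Brouwer's principle there, and then recover $F$ as the composite with the section. Your additional explicit $\epsilon$--$N$ unwinding is a harmless elaboration that the paper omits.
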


\begin{proof}
  Suppose that $S$ is a retract of $\baire$. Then the
  inclusion $\iota \colon S \to \baire$ is continuous, and by
  definition there is a continuous map $p \colon \baire \to S$ such
  that $p \circ \iota = 1_S$.

  Let $f$ be any continuous function from $S$ to $\nat$. Note that
  $f \circ p$ is a function from $\baire$ to $\nat$, and
  so continuous. But then $f = f \circ p \circ \iota$,
  and so $f$ must also be continuous.
\end{proof}

\begin{lemma}
  \label{lem:exfromallfuncont}
  Suppose that every function $\baire \to \nat$ is
  continuous. Then the slice category $\gsets{\dihedral}/\ZZ_\infty$ contains
  two objects $X$ and $Y$ such that $X \times 2 \cong Y \times 2$ but
  there is no isomorphism between $X$ and $Y$.
\end{lemma}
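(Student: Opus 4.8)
The plan is to take $X$ and $Y$ to be the two projections built out of the family $\theta_\chi$. Let $X$ be the object $\pi_0 \colon \ZZ_\infty \times 2\ZZ \to \ZZ_\infty$ of $\gsets{\dihedral}/\ZZ_\infty$, where $\dihedral$ acts diagonally, using on $\ZZ_\infty$ the action of Section~\ref{sec:equivariance} (which restricts from the action on $2^\ZZ$, as it preserves decreasing sequences) and on $2\ZZ$ the action $t \cdot n = n + 2$, $r \cdot n = -n$; and let $Y$ be $\pi_0 \colon \ZZ_\infty \times (2\ZZ + 1) \to \ZZ_\infty$, defined the same way. Recall that in the slice $\gsets{\dihedral}/\ZZ_\infty$ the terminal object is $\mathrm{id}_{\ZZ_\infty}$, binary products are pullbacks over $\ZZ_\infty$, and $2 = 1 + 1$ is the object $\pi_0 \colon \ZZ_\infty \times 2 \to \ZZ_\infty$ with $\dihedral$ acting trivially on the factor $2$.

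First I would verify that $X \times 2 \cong Y \times 2$. Unwinding the slice product, $X \times 2$ is $\pi_0 \colon \ZZ_\infty \times 2\ZZ \times 2 \to \ZZ_\infty$ and likewise for $Y \times 2$, and the map $(\chi, n, i) \mapsto (\chi, \theta_\chi(n, i))$ is a morphism over $\ZZ_\infty$; it is a bijection because each $\theta_\chi$ is, and it is $\dihedral$-equivariant by the equivariance lemma of Section~\ref{sec:equivariance} (applied to $\theta_\chi$ as a map on $2\ZZ \times 2$, with the trivial action on the factor $2$). Hence it is an isomorphism in $\gsets{\dihedral}/\ZZ_\infty$.

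The real content is showing $X \not\cong Y$. Suppose, for a contradiction, that $f \colon X \to Y$ is an isomorphism in $\gsets{\dihedral}/\ZZ_\infty$. Being a morphism over $\ZZ_\infty$, it has the form $f(\chi, n) = (\chi, \varphi_\chi(n))$ for a family of functions $\varphi_\chi \colon 2\ZZ \to 2\ZZ + 1$; since $f$ is a bijection preserving the fibres of $\pi_0$, each $\varphi_\chi$ is a bijection; and $\dihedral$-equivariance of $f$ unwinds to $\varphi_{g \cdot \chi}(g \cdot n) = g \cdot \varphi_\chi(n)$, i.e.\ the family is equivariant in $\ZZ_\infty$. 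I then claim it is also continuous in $\ZZ_\infty$: for each fixed $n$, the assignment $\chi \mapsto \varphi_\chi(n)$ is a function $\ZZ_\infty \to 2\ZZ + 1 \cong \nat$, and by the hypothesis together with Lemma~\ref{lem:allfuncontretract}, every function to $\nat$ from a retract of $\baire$ is continuous; continuity of $\chi \mapsto \varphi_\chi(n)$ in the topology that $\ZZ_\infty$ inherits from $2^\ZZ$ is, after enlarging any finite set of coordinates to a window $\{m : |m| < N\}$, exactly the statement that $\varphi$ is continuous in $\ZZ_\infty$. Thus we would obtain a continuous equivariant family of bijections between $2\ZZ$ and $2\ZZ + 1$ indexed by $\ZZ_\infty$, contradicting Theorem~\ref{thm:noequivhomeo} (which is proved constructively and hence holds in the ambient setting). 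So no such $f$ exists, and $X \not\cong Y$.

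The one step requiring real work — everything else being a matter of unwinding the slice category and quoting earlier results — is that $\ZZ_\infty$ is a retract of $\baire$, so that Lemma~\ref{lem:allfuncontretract} applies. I would show this by producing a homeomorphism $\ZZ_\infty \cong \nat_\infty \sqcup \nat_\infty$, where $\nat_\infty$ is the space of decreasing binary sequences on $\nat$: the clopen case split on the value $\chi(0)$ sends a decreasing $\chi \in 2^\ZZ$ either to its restriction to $\{1, 2, \dots\}$ (with $\chi$ forced to be $1$ on $(-\infty, 0]$) or to the complement of its restriction to $\{-1, -2, \dots\}$ (with $\chi$ forced to be $0$ on $[0, \infty)$), and in each case the restriction ranges freely over a copy of $\nat_\infty$. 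Now $\nat_\infty$ is a retract of $2^\nat$ — a retraction sends $\gamma$ to the decreasing sequence whose value at $m$ is $1$ exactly when $\gamma(i) = 1$ for all $i \le m$ — hence of $\baire$; and a finite coproduct of retracts of $\baire$ is again a retract of $\baire$, so the claim follows. One should also check that the notion of continuity delivered by Lemma~\ref{lem:allfuncontretract} agrees with the ``$|m| < N$ window'' formulation of Section~\ref{sec:nonexistence}, but this is routine since each basic open of $2^\ZZ$ depends on only finitely many coordinates.
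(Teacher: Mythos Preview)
Your proof is correct and follows essentially the same route as the paper's: identify objects of $\gsets{\dihedral}/\ZZ_\infty$ with equivariant families over $\ZZ_\infty$, use $\theta_\chi$ for the isomorphism $X\times 2\cong Y\times 2$, and derive a contradiction from Theorem~\ref{thm:noequivhomeo} after invoking Lemma~\ref{lem:allfuncontretract}. The paper is terser --- it simply asserts that $\ZZ_\infty\times\ZZ$ is ``evidently'' a retract of $\baire$ --- whereas you supply the details via $\ZZ_\infty\cong\nat_\infty\sqcup\nat_\infty$ and argue fibrewise in $n$; this is a harmless variation on the same idea.
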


\begin{proof}
  We consider the example from theorem \ref{thm:noequivhomeo}. Note
  that the category of equivariant families of maps over $\ZZ_\infty$
  is equivalent to the slice category $\gsets{\dihedral}/\ZZ_\infty$.

  The space $\ZZ_\infty \times \ZZ$ is evidently a retract of
  $\baire$, and so every function to $\ZZ$ continuous by lemma
  \ref{lem:allfuncontretract}. Hence every function
  $\ZZ_\infty \times 2\ZZ \to \ZZ_\infty \times (2\ZZ + 1)$ is
  continuous, and in particular any family of bijections $\varphi$. We
  can now apply theorem \ref{thm:noequivhomeo}.
\end{proof}

\begin{theorem}
  \label{thm:main}
  There is topos $\topf$ containing objects $X$ and $Y$ such
  that $X \times 2 \cong Y \times 2$ but $X \not \cong Y$.
\end{theorem}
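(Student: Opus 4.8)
The plan is to take $\topf$ to be the topos obtained by reinterpreting the preceding construction over a base topos $\tope$ in which every function $\baire \to \nat$ is continuous. There are several such toposes, which is why the abstract speaks of examples: the effective topos (where continuity of all functions $\baire \to \nat$ follows from the Kreisel--Lacombe--Shoenfield theorem), the function realizability (Kleene--Vesley) topos, and Johnstone's topological topos all validate this principle. Fixing any of them, let $\topf$ be the slice over $\ZZ_\infty$ of the category of $\dihedral$-objects in $\tope$; equivalently, $\topf$ is ``$\gsets{\dihedral}/\ZZ_\infty$'' built in the internal logic of $\tope$. Since $\dihedral$ is a group object of $\tope$ and $\ZZ_\infty$ is an object of $\tope$, forming $\dihedral$-objects and then slicing both preserve toposes (and Grothendieck toposes, if $\tope$ is one), so $\topf$ is a genuine topos.

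Inside $\tope$ we have available the objects, actions and bijections of Sections \ref{sec:basic} and \ref{sec:equivariance}. Let $X$ and $Y$ be the constant families $2\ZZ$ and $2\ZZ + 1$ over $\ZZ_\infty$ carrying the $\dihedral$-actions defined in Section \ref{sec:equivariance}; these are objects of $\topf$. The family $\theta_\chi$ is given by an explicit formula, so it determines an honest morphism $X \times 2 \to Y \times 2$ in $\topf$, and it is an isomorphism because its inverse is given by the same formula. Hence $X \times 2 \cong Y \times 2$ in $\topf$.

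It remains to see that $X \not\cong Y$ in $\topf$, and this is where the choice of $\tope$ is used. A morphism $X \to Y$ in $\topf$ is the same thing as a $\dihedral$-equivariant, $\ZZ_\infty$-indexed family of maps $2\ZZ \to 2\ZZ + 1$ in $\tope$. As in the proof of Lemma \ref{lem:exfromallfuncont}, $\ZZ_\infty \times 2\ZZ$ is a retract of $\baire$ in $\tope$, so by Lemma \ref{lem:allfuncontretract} (which is constructive, hence valid in the internal logic of $\tope$) every such family is continuous in $\ZZ_\infty$. If the morphism were an isomorphism it would in particular be a fibrewise bijection, and so it would give a continuous equivariant family of bijections between $2\ZZ$ and $2\ZZ + 1$, contradicting Theorem \ref{thm:noequivhomeo} interpreted in $\tope$ (again legitimate since its proof is constructive). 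Conceptually this whole argument is just Lemma \ref{lem:exfromallfuncont} read in the internal logic of $\tope$; the part that needs care, and which I expect to be the main obstacle, is the bookkeeping identifying ``isomorphism between the objects $X$ and $Y$ of $\topf$'' with ``internally in $\tope$, isomorphism in $\gsets{\dihedral}/\ZZ_\infty$'', and likewise for its negation, so that Theorem \ref{thm:noequivhomeo} and Lemma \ref{lem:allfuncontretract} transfer cleanly through the interpretation. Everything else is either already established above or a citation for the continuity principle in the chosen $\tope$.
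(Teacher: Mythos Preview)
Your proposal is correct and follows essentially the same route as the paper: choose a base topos $\tope$ validating Brouwer's continuity principle, form internal $\dihedral$-sets, slice over $\ZZ_\infty$, and then run Lemma~\ref{lem:exfromallfuncont} in the internal logic of $\tope$. Your explicit identification of morphisms in $\topf$ with equivariant $\ZZ_\infty$-indexed families in $\tope$, and your flagging of the internal/external bookkeeping, are exactly the points the paper leaves implicit; the only cosmetic difference is your citing Johnstone's topological topos where the paper cites the van der Hoeven--Moerdijk sheaf topos.
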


\begin{proof}
  Let $\tope$ be any topos that satisfies Brouwer's continuity axiom
  that all functions $\baire \to \nat$ are continuous. This includes
  a couple of well known toposes in realizability, the effective topos
  and the function realizability topos (see Proposition 3.1.6 and
  Proposition 4.3.4 respectively in \cite{vanoosten}). As shown by Van
  der Hoeven and Moerdijk in \cite{vdhoevenmoerdijk}, it is also
  possible to construct a topos of sheaves with this property.

  A well known result in topos theory is that one can construct a
  category of internal $G$-sets from an internal group $G$ and that
  this category is again a topos. See \cite[Section
  V.6]{moerdijkmaclane} for more details.

  Note that we can construct the infinite dihedral group $\dihedral$ in the
  internal logic of $\tope$, to obtain an internal group $(\dihedral)_\tope$ in
  $\tope$. We then apply the construction of internal $G$-sets to
  $(\dihedral)_\tope$ and refer to the resulting topos as $(\gsets{\dihedral})_\tope$.

  Next we define $\ZZ_\infty$ and its action internally in $\tope$
  to obtain an object of $(\gsets{\dihedral})_\tope$ and take our topos
  $\topf$ to be the slice category $(\gsets{\dihedral})_\tope /
  \ZZ_\infty$. It is again well known that every slice category of a
  topos is again a topos (see \cite[Section IV.7]{moerdijkmaclane}).

  Finally, we note that our proof of lemma \ref{lem:exfromallfuncont}
  is entirely constructive, so we can carry it out in the internal
  logic of $\tope$. However, we can now remove the assumption that all
  functions $\baire \to \nat$ are continuous, since we chose $\tope$
  so that it holds in the internal logic.

  This then gives us two objects $X$ and $Y$ in $(\gsets{\dihedral})_\tope /
  \ZZ_\infty$ such that $X \times 2 \cong Y \times 2$ but $X \not
  \cong Y$, as required.
\end{proof}

\section{Strengthenings of the Main Theorem}

We now consider two slightly stronger versions of the main
theorem. There are two issue that we address.

The first is that one might expect that it becomes possible to
construct the bijection $X \cong Y$ if we add the extra requirement
that $X$ and $Y$ have decidable equality. We check that in fact
decidable equality of $X$ and $Y$ already holds in the topos we have
constructed, and so it does not help.
\begin{proposition}
  \label{prop:deceqmain}
  The objects $X$ and $Y$ considered in theorem \ref{thm:main} have
  decidable equality.
\end{proposition}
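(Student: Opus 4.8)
The plan is to unwind what the objects $X$ and $Y$ actually are. In the proof of Lemma~\ref{lem:exfromallfuncont}, $X$ and $Y$ are the objects of the slice $\gsets{\dihedral}/\ZZ_\infty$ corresponding, respectively, to the two projections $\pi_0 \colon \ZZ_\infty \times 2\ZZ \to \ZZ_\infty$ and $\pi_0 \colon \ZZ_\infty \times (2\ZZ + 1) \to \ZZ_\infty$; internally in $\tope$ these become the objects $\ZZ_\infty \times 2\ZZ$ and $\ZZ_\infty \times (2\ZZ+1)$ of $(\gsets{\dihedral})_\tope/\ZZ_\infty$. Decidable equality of an object in a slice category $\mathcal{C}/I$ is, internally, decidable equality of the corresponding object fibrewise; but since equality in $\ZZ_\infty \times 2\ZZ$ over $\ZZ_\infty$ is just equality in the fibre $2\ZZ$, it suffices to show that $2\ZZ$ (and likewise $2\ZZ+1$) has decidable equality as an object of $(\gsets{\dihedral})_\tope$, hence internally in $\tope$.

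So the key step is: $\ZZ$, and therefore its decidable subobjects $2\ZZ$ and $2\ZZ+1$, has decidable equality in any topos. This is immediate — $\ZZ$ is constructed from $\nat$ by a quotient, and equality of integers is decidable by primitive recursion, a fact provable in the internal logic of any topos (indeed in $\mathbf{HA}$). Carrying this through the interpretation in $\tope$, and then through the construction of internal $\dihedral$-sets and the slice over $\ZZ_\infty$, the statement ``$X$ and $Y$ have decidable equality'' holds in $\topf$ because its constructive proof can be interpreted in $\tope$ exactly as in the proof of Theorem~\ref{thm:main}. Concretely: the diagonal $\Delta_{2\ZZ} \colon 2\ZZ \to 2\ZZ \times 2\ZZ$ is a complemented mono, its complement being the equivariant subobject $\{(m,n) : m \neq n\}$ (equivariance is clear since the $\dihedral$-action on $\ZZ$ consists of isometries, which preserve both equality and inequality), and this property transports along the slice projection.

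The only point requiring a little care — and the one I would flag as the main obstacle — is making precise the reduction ``decidable equality in $\mathcal{C}/I$ reduces to fibrewise decidable equality of the domain.'' For a morphism $f \colon A \to I$ regarded as an object of $\mathcal{C}/I$, two generalized elements are equal iff they are equal in $A$ and agree over $I$; the latter is automatic once they are equal in $A$, so decidable equality of $f$ as a slice object is equivalent to decidable equality of $A$ as an object of $\mathcal{C}$. Stated internally, this is the observation that the internal-logic formula ``$\forall a\, a'\!\in A.\ f(a)=f(a') \Rightarrow (a = a' \vee a \neq a')$'' defining slice-decidability follows from plain decidable equality of $A$. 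Once this is in place the proposition is a short chain of ``the constructive proof interprets in $\tope$'' steps, mirroring the final paragraph of the proof of Theorem~\ref{thm:main}, and no genuine new difficulty arises.
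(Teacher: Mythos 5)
Your proposal is correct and takes essentially the same approach as the paper: internally $\ZZ$ has decidable equality, the resulting decision function (equivalently, the complement of the diagonal) is automatically equivariant, and this passes to the slice over $\ZZ_\infty$ because the product there is the fibre product $2\ZZ \times_{\ZZ_\infty} 2\ZZ$. The only quibble is your parenthetical claim that slice-decidability is \emph{equivalent} to decidable equality of the total object --- in general it is weaker, since it only requires deciding equality of elements lying over the same point of the base --- but your argument only uses the implication in the direction that is actually true.
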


\begin{proof}  
  Note that since $\ZZ$ has decidable equality (provably in
  constructive mathematics), we can show internally in $\tope$ that
  there is a decision function
  $2\ZZ \times_{\ZZ_\infty} 2\ZZ \rightarrow \ZZ_\infty \times 2$ and
  similarly for $Y$. Since equality is preserved by the action of a
  group, the decision functions are equivariant, and so witness the
  decidable equality of $X$ and $Y$ in $(\gsets{\dihedral})_\tope/\ZZ_\infty$.
\end{proof}

\begin{corollary}
  There is topos $\topf$ containing objects $X$ and $Y$ with
  decidable equality such that $X \times 2 \cong Y \times 2$ but
  $X \not \cong Y$.
\end{corollary}

The next issue is a little subtle. Essentially, it might happen that
even though there is no isomorphism $X \cong Y$ in the topos, the
statement ``$X$ and $Y$ are isomorphic'' still holds in the internal
logic of the topos. One way of looking at this is that we can
construct the collection of bijections between $X$ and $Y$ in the
internal logic of the topos, to give an object $\iso(X, Y)$. External
isomorphisms then correspond to global sections of $\iso(X, Y)$,
i.e. to maps $1 \rightarrow \iso(X, Y)$. Meanwhile, the internal truth
of the statement ``$X$ and $Y$ are isomorphic'' corresponds to the
unique map $\iso(X, Y) \rightarrow 1$ being an epimorphism, which is
weaker. In fact in our topos $\topf$, exactly this happens. In this case
$\iso(X, Y)$ consists of all bijections from $X$ to $Y$, where the
isomorphisms are precisely the equivariant elements.

To deal with this, we will show in the next lemma how to construct a
new topos where $\iso(X, Y) \rightarrow 1$ is not an epimorphism.
\begin{lemma}
  \label{lem:noisointernal}
  Suppose that we are given a topos $\topf$ with objects $X$ and $Y$,
  and isomorphism $X \times 2 \cong Y \times 2$ such that there exists
  no (external) isomorphism $X \cong Y$. Then there is a topos
  $\mathcal{F}'$ with objects $X'$ and $Y'$ and an isomorphism
  $X' \cong Y'$ such that the internal logic of $\topf'$ does not
  satisfy the statement ``there exists a bijection from $X$ to $Y$''.
\end{lemma}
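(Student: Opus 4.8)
The plan is to pass to the Artin gluing of $\topf$ along its global sections functor (also known as the Freyd cover, or scone). Write $\Gamma := \topf(1,-) \colon \topf \to \set$ and let $\topf'$ be the category whose objects are triples $(S, E, \phi)$ with $S$ a set, $E$ an object of $\topf$, and $\phi \colon S \to \Gamma E$ a function, with the evident commuting pairs as morphisms. Since $\Gamma$ is left exact, $\topf'$ is a topos; moreover finite limits and finite colimits in $\topf'$ are computed componentwise, so epimorphisms in $\topf'$ are exactly the morphisms that are epimorphic in each of the two components. There is a full and faithful functor $i \colon \topf \to \topf'$ given by $E \mapsto (\Gamma E, E, \mathrm{id})$, and it preserves finite limits and exponentials (it is ``logical away from $\Omega$''). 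We set $X' := i X$ and $Y' := i Y$.

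First, $X' \times 2 \cong Y' \times 2$ in $\topf'$. Since $i$ preserves finite limits it preserves binary products, and since $1$ is indecomposable in the topos produced by Theorem \ref{thm:main} — a direct check, as in $\tope$ the space $\ZZ_\infty$ has no nontrivial $\dihedral$-invariant decidable subobject — we have $\Gamma(1+1) \cong 1+1$, so $i$ also preserves the coproduct $2 = 1+1$. Applying $i$ to the given isomorphism $X \times 2 \cong Y \times 2$ then yields $X' \times 2 \cong i(X \times 2) \cong i(Y \times 2) \cong Y' \times 2$.

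The substance of the proof is the observation that the internal object of bijections $\iso_{\topf'}(X', Y')$ has empty set-component. Using the standard description of exponentials in an Artin gluing, together with the fact that $i$ preserves finite limits and exponentials (so that $(Y')^{X'} \cong i(Y^X)$), the exponential $(Y')^{X'}$ has set-component $\Gamma(Y^X)$, the set of morphisms $X \to Y$ in $\topf$; and the internally-defined subobject of invertible maps has set-component $\Gamma(\iso_\topf(X,Y))$, the set of isomorphisms $X \to Y$ in $\topf$, which is empty by hypothesis. Since $1_{\topf'} = (\{\ast\}, 1, \mathrm{id})$ has a one-element set-component and epimorphisms in $\topf'$ are detected on both components, the canonical map $\iso_{\topf'}(X', Y') \to 1_{\topf'}$ is not an epimorphism, its set-component being $\emptyset \to \{\ast\}$. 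Equivalently, $\topf'$ does not satisfy the statement ``there is a bijection from $X'$ to $Y'$'' in its internal logic, which is the required conclusion.

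I expect the main obstacle to be the computation of $\iso_{\topf'}(X', Y')$: one must carefully trace the description of exponentials, and then of the subobject of invertible maps, through the glued topos, and confirm that the set-component genuinely collapses to the (empty) set of isomorphisms of $\topf$, rather than to the set of all bijections of the underlying global-section sets. The remaining points — that $\topf'$ is a topos, that $i$ is a finite-limit- and exponential-preserving full embedding, that limits and colimits (hence epimorphisms) in $\topf'$ are computed componentwise, and the indecomposability of $1$ used for $X' \times 2 \cong Y' \times 2$ — are all routine.
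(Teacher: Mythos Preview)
Your approach and the paper's are built on the same construction: both pass to the Freyd cover $\topf' = (\set \downarrow \Gamma)$. The difference lies in the choice of $X'$ and $Y'$. You take $X' = iX = (\Gamma X, X, \mathrm{id})$, the image under the diagonal embedding; the paper instead takes $X'$ to be the unique map $0 \to \Gamma X$, i.e.\ the image under the closed embedding $E \mapsto (0, E, !)$. Both choices force the set-component of $\iso_{\topf'}(X',Y')$ to map into $\Gamma(\iso_\topf(X,Y)) = \emptyset$, so the heart of the argument (non-surjectivity onto the set-component of $1$) goes through either way, and your computation that $i$ preserves exponentials and hence $\iso$ is correct.

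Where the two diverge is the verification of $X' \times 2 \cong Y' \times 2$. With the paper's choice the set-component of $X'$ is $0$, so $X' \times 2 = (0, X \times 2, !)$ automatically and the given isomorphism in $\topf$ lifts with no hypothesis on $\topf$. With your choice you need $i(2_\topf) \cong 2_{\topf'}$, equivalently that $\Gamma$ preserve $1+1$, i.e.\ that $1$ be indecomposable in $\topf$. You handle this by appealing to the particular topos of Theorem~\ref{thm:main}; the claim there is correct (a $\dihedral$-equivariant map $\ZZ_\infty \to 2$ is continuous by Brouwer's principle, and translating any $\chi$ far enough makes it agree with $\pm\infty$ on any finite window, forcing constancy), but this means you have proved only the instance of the lemma you need, not the lemma as stated for arbitrary $\topf$. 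Replacing $iX$ by $(0, X, !)$ removes this detour entirely and yields the lemma in full generality.
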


\begin{proof}
  We take $\topf'$ to be the Sierpi\'{n}ski cone of $\topf$, which we recall
  is defined to be the comma category $(\set \downarrow \Gamma)$,
  where $\Gamma \colon \topf \rightarrow \set$ is the global sections
  functor, $\topf(1, -)$ (see
  e.g. \cite[Example A2.1.12]{theelephant}).

  We define $X'$ to be the unique map $0 \to \Gamma X$, and $Y'$ to be
  the unique map $0 \to \Gamma Y$. In $(\set \downarrow \Gamma)$,
  limits and colimits are computed levelwise. Hence $2$ is given
  by the canonical map $2 \to \Gamma 2$, and $X' \times 2$ is the
  canonical map
  $0 \times 2 \to \Gamma (X \times 2)$. Since $0 \times 2 \cong 0$,
  this means $X' \times 2$ is the unique map $0 \to \Gamma(X \times
  2)$. Similarly, $Y' \times 2$ must be the unique map $0 \to \Gamma(Y
  \times 2)$. We can now clearly see that the isomorphism $X \times 2
  \cong Y \times 2$ lifts to an isomorphism $X' \times 2 \cong Y'
  \times 2$.

  Now $\iso(X', Y')$ has to be of the form $\iso(X', Y')_0 \to \Gamma
  (\iso(X', Y')_1)$. Since the projection from $(\set \downarrow
  \Gamma)$ to $\topf$ is logical, we have that $\iso(X', Y')_1 \cong
  \iso(X, Y)$. So $\iso(X', Y')$ is of the form $\iso(X', Y')_0 \to \Gamma
  (\iso(X, Y))$.

  By assumption there are no isomorphisms from $X$ to $Y$. Hence
  $\iso(X, Y)$ has no global sections, which precisely says that
  $\Gamma(\iso(X, Y))$ is the empty set. Hence $\iso(X', Y')_0$ must
  also be empty.

  Finally, we note that the statement ``there exists an isomorphism
  from $X'$ to $Y'$'' holds in the internal logic if and only if
  the unique map $\iso(X', Y') \to 1$ is an epimorphism. This is the
  case precisely when both of the maps $\iso(X', Y')_0 \to 1$ and
  $\iso(X', Y')_1 \to 1$ are epimorphisms. Although the latter might
  be epi (which is exactly why we need this lemma), the
  former is certainly not, since it is a function from the empty
  set to $1$.
\end{proof}

\begin{lemma}
  \label{lem:freydcoverdeceq}
  Suppose that $X$ and $Y$ have decidable equality in a topos $\topf$. Then
  $X'$ and $Y'$ have decidable equality in the topos $\mathcal{F}'$
  constructed in lemma \ref{lem:noisointernal}.
\end{lemma}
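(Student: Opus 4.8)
The plan is to trace decidable equality through the construction of the Sierpi\'nski cone, exactly as the previous two lemmas traced the isomorphism $X \times 2 \cong Y \times 2$ and the object $\iso(X,Y)$ through it. Recall that $\topf' = (\set \downarrow \Gamma)$, that limits and colimits are computed levelwise, and that $X'$ is the map $0 \to \Gamma X$ and $Y'$ the map $0 \to \Gamma Y$. An object $A$ of $\topf'$ has decidable equality iff the diagonal $A \to A \times A$ is a complemented subobject, equivalently iff there is a map $A \times A \to 1 + 1$ classifying the diagonal. Since everything is levelwise, it suffices to produce such a decision map at each of the two levels and check compatibility with the structure map.

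First I would observe that for $A = X'$, the two levels are: the vertex level, which is the empty set $0$, and the base level, which is $\Gamma X$. At the vertex level the decision map $0 \times 0 \to 1 + 1$ exists trivially (the domain is $0$). At the base level I need a decision map $\Gamma X \times \Gamma X \to 1 + 1$; this is exactly the statement that $\Gamma X$ has decidable equality in $\set$. Now $\Gamma = \topf(1,-)$ is applied to an object $X$ of $\topf$ that has decidable equality by hypothesis, so $X \to X \times X$ is complemented in $\topf$; applying the left exact functor $\Gamma$ preserves the pullback square and the coproduct decomposition $X \times X \cong X + (\lnot\Delta)$ that witnesses decidability (a coproduct of complemented subobjects, which $\Gamma$ preserves because it preserves finite limits and such coproducts are stable and disjoint), so $\Gamma X \times \Gamma X \cong \Gamma(X \times X) \cong \Gamma X + \Gamma(\lnot\Delta)$, giving the required decision map in $\set$. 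The same argument applies verbatim to $Y'$. Finally I would check the compatibility condition: the structure map of $X' \times X'$ is the unique map $0 \to \Gamma(X \times X)$, and the two decision maps must form a commuting square over the decision map for $2 \cong 1 + 1$ (whose structure map is the canonical $2 \to \Gamma 2$); commutativity is automatic since the vertex of the domain is $0$.

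The main obstacle, such as it is, is purely bookkeeping: making sure that ``decidable equality'' is being handled as a property that transports along the levelwise structure of the comma category, and in particular that the canonical natural transformation defining a morphism of $(\set \downarrow \Gamma)$ respects the coproduct decompositions rather than just the underlying maps. The cleanest way to avoid fuss is to phrase decidable equality of $A$ as ``the diagonal $A \hookrightarrow A \times A$ is complemented'' and note that in $(\set \downarrow \Gamma)$ a subobject is complemented iff it is complemented at both levels (because $1 + 1$ and subobject classifiers are computed levelwise, or directly because complemented subobjects are precisely those arising from a decomposition into a coproduct, and coproducts are levelwise). Then the whole proof reduces to: the vertex level is $0$, hence trivially decidable; the base level is $\Gamma$ applied to something decidable in $\topf$, and $\Gamma$ is left exact, hence preserves complemented subobjects. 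This gives decidable equality of $X'$ and $Y'$ in $\topf'$, completing the proof.
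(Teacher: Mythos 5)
Your overall strategy (produce the decision morphism level by level, with the vertex level trivial because it is $0$ and the compatibility square automatic) is the right shape, but you have misidentified what the two levels of the Sierpi\'nski cone are, and this breaks the key step. An object of $(\set \downarrow \Gamma)$ is a triple $(S, A, f \colon S \to \Gamma A)$ with $S$ a set and $A$ an object of $\topf$, and a morphism consists of a map in $\set$ together with a map in $\topf$ making the evident square commute; the ``base level'' of $X'$ is therefore the object $X$ of $\topf$ itself (this is the level at which the projection to $\topf$ is logical), not the set $\Gamma X$. So the datum you must supply at the base level is a decision morphism $X \times X \to 2$ \emph{in $\topf$} --- which is exactly what the hypothesis hands you, to be used directly --- and not a map $\Gamma X \times \Gamma X \to 1 + 1$ in $\set$. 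The map of sets you construct is neither necessary nor sufficient: not necessary, because every object of $\set$ has decidable equality anyway (so your argument never actually uses the hypothesis on $X$), and not sufficient, because a function $\Gamma(X \times X) \to \Gamma 2$ is not the $\topf$-component of a morphism of the comma category unless it is $\Gamma$ of some $k \colon X \times X \to 2$ compatible with the structure maps.

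There is also a secondary error inside the detour: $\Gamma$ does not preserve coproducts, even disjoint stable ones, so the claimed decomposition $\Gamma X \times \Gamma X \cong \Gamma X + \Gamma(\lnot\Delta)$ fails in general (in $\set \times \set$, for instance, $\Gamma(1+1)$ has four elements while $\Gamma 1 + \Gamma 1$ has two); left exactness gives preservation of monos and pullbacks, not of complemented subobjects. The fix is short and is what the paper intends: pair the hypothesized decision morphism $d \colon X \times X \to 2$ in $\topf$ with the unique map $0 \to 2$ in $\set$; the square required for this pair to be a morphism $X' \times X' \to 2$ in $(\set \downarrow \Gamma)$ commutes automatically because the $\set$-component of $X' \times X'$ is $0$, and since pullbacks in the comma category are computed levelwise, this morphism classifies the diagonal of $X'$ (at the $\set$ level both the diagonal and the pullback are $0$; at the $\topf$ level this is exactly the hypothesis on $X$). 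The same works verbatim for $Y'$.
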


\begin{proof}
  It is straightforward to show that the decision morphism $X \times X
  \to 2$ lifts to a morphism $X' \times X' \to 2$ witnessing that $X'$
  has decidable equality, and similarly for $Y'$.
\end{proof}

\begin{corollary}
  \label{cor:toposweverything}
  There is a topos $\mathcal{F}'$ containing objects $X$ and $Y$, and
  an isomorphism $X \times 2 \cong Y \times 2$ such that the statement
  ``there exists a bijection from $X$ to $Y$'' does not hold in the
  internal logic of $\mathcal{F}'$.

  Furthermore $X$ and $Y$ have decidable equality.
\end{corollary}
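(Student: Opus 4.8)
The plan is to assemble the corollary directly from the machinery developed in the preceding lemmas, essentially by a chain of three constructions applied to each other. First I would invoke Theorem \ref{thm:main} together with Proposition \ref{prop:deceqmain} to obtain a topos $\topf$ with objects $X$ and $Y$ admitting an isomorphism $X \times 2 \cong Y \times 2$, with no external isomorphism $X \cong Y$, and such that $X$ and $Y$ both have decidable equality. This is the concrete topos $(\gsets{\dihedral})_\tope / \ZZ_\infty$ built in Section \ref{sec:maketopos}. At this stage everything we need about the ``base'' example is already on the table; no new calculation is required.

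Next I would apply Lemma \ref{lem:noisointernal} to this $\topf$, producing the Sierpi\'nski cone $\topf' = (\set \downarrow \Gamma)$ together with objects $X'$ and $Y'$ and an isomorphism $X' \times 2 \cong Y' \times 2$. By the conclusion of that lemma, the statement ``there exists a bijection from $X$ to $Y$'' fails in the internal logic of $\topf'$ --- here I would note that, modulo relabelling, the objects called $X$ and $Y$ in the statement of the corollary are the objects $X'$ and $Y'$ of the lemma, so the corollary's first assertion is exactly what Lemma \ref{lem:noisointernal} delivers. Then I would invoke Lemma \ref{lem:freydcoverdeceq}, whose hypothesis (that $X$ and $Y$ have decidable equality in $\topf$) is met by Proposition \ref{prop:deceqmain}, to conclude that $X'$ and $Y'$ also have decidable equality in $\topf'$. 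Taking $\mathcal{F}' = \topf'$ and renaming $X', Y'$ to $X, Y$ gives precisely the statement of the corollary.

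Since each of the three ingredients is a lemma already proved in the excerpt, the proof is genuinely just a matter of composing them in the right order and checking that the hypotheses line up, so there is no real obstacle. The one point I would be slightly careful about is the renaming: Lemma \ref{lem:noisointernal} phrases the internal-logic failure in terms of ``a bijection from $X$ to $Y$'' (the \emph{original} objects), but what is actually meant --- and what is needed for the corollary --- is the failure for the lifted objects $X'$ and $Y'$; I would make explicit that these coincide after the identification $\topf' = \mathcal{F}'$, $X \leftrightarrow X'$, $Y \leftrightarrow Y'$. Beyond that, the proof is one or two sentences citing Theorem \ref{thm:main}, Proposition \ref{prop:deceqmain}, Lemma \ref{lem:noisointernal}, and Lemma \ref{lem:freydcoverdeceq} in turn.
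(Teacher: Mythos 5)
Your proposal is correct and matches the paper's proof exactly: the paper likewise applies Lemma \ref{lem:noisointernal} to Theorem \ref{thm:main}, and handles decidable equality via Lemma \ref{lem:freydcoverdeceq} together with Proposition \ref{prop:deceqmain}. Your extra care about the relabelling of $X', Y'$ as $X, Y$ is a reasonable clarification but changes nothing substantive.
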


\begin{proof}
  Apply lemma \ref{lem:noisointernal} to theorem \ref{thm:main}.
  For decidable equality, apply lemma \ref{lem:freydcoverdeceq} and
  proposition \ref{prop:deceqmain}.
\end{proof}

\begin{corollary}
  It is not provable in constructive mathematics that if
  $X \times 2 \cong Y \times 2$ then $X \cong Y$, even if we require
  $X$ and $Y$ to have decidable equality.
\end{corollary}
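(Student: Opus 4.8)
The plan is to invoke the soundness of the internal logic of a topos with respect to constructive reasoning, and then derive a contradiction from Corollary \ref{cor:toposweverything}. Recall that the internal logic of any elementary topos validates intuitionistic higher-order logic, and in particular any theorem of constructive mathematics in the sense of Bishop and Bridges holds in the internal logic of every topos (see e.g.\ \cite{moerdijkmaclane} or \cite{theelephant}). So it suffices to exhibit a single topos in which the relevant instance of dividing by two fails internally.

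First I would take the topos $\mathcal{F}'$ produced by Corollary \ref{cor:toposweverything}, together with its objects $X$ and $Y$ and the isomorphism $X \times 2 \cong Y \times 2$. Since this isomorphism exists externally, the internal statement ``there exists a bijection from $X \times 2$ to $Y \times 2$'' holds in $\mathcal{F}'$; and by the same corollary $X$ and $Y$ have decidable equality, which is again an internal statement. On the other hand, Corollary \ref{cor:toposweverything} tells us that the internal statement ``there exists a bijection from $X$ to $Y$'' does \emph{not} hold in $\mathcal{F}'$.

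Now suppose, for contradiction, that the implication ``if $X \times 2 \cong Y \times 2$ and $X$, $Y$ have decidable equality, then $X \cong Y$'' were provable in constructive mathematics. By soundness, this implication holds in the internal logic of $\mathcal{F}'$. Instantiating it at the particular objects $X$ and $Y$ above, and using that the hypotheses hold internally, we conclude that ``there exists a bijection from $X$ to $Y$'' holds in the internal logic of $\mathcal{F}'$ --- contradicting the previous paragraph. Hence no such constructive proof exists, and dropping the decidable-equality hypothesis only makes the unprovable statement stronger.

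The only real subtlety --- and the step I would spend the most care on --- is the appeal to soundness: one must be precise about what ``provable in constructive mathematics'' is taken to mean, and check that the relevant statements (``$X \times 2 \cong Y \times 2$'', ``$X$ has decidable equality'', ``$X \cong Y$'') are genuinely expressible as formulas in the internal language of a topos, so that the interpretation theorem applies. Once the framework is fixed (say, intuitionistic higher-order logic in the Mitchell--B\'enabou language), this is routine, and the remainder of the argument is a one-line instantiation of Corollary \ref{cor:toposweverything}.
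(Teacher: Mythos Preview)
Your proposal is correct and follows essentially the same approach as the paper: invoke soundness of the internal logic of a topos for constructive proofs and derive a contradiction from Corollary~\ref{cor:toposweverything}. Your version is more explicit about the soundness step and the expressibility of the relevant statements in the internal language, but the underlying argument is identical.
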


\begin{proof}
  If there was a constructive proof of this statement, then it would
  hold in the internal logic of any topos, contradicting
  corollary \ref{cor:toposweverything}.
\end{proof}

\bibliographystyle{abbrv}
\bibliography{dividebytwo}

\end{document}